\documentclass{amsart}
\usepackage{amssymb,amscd,amsthm}
\usepackage{latexsym}
\date{\today}

\newcommand{\Z}{{\mathbb Z}}
\newcommand{\R}{{\mathbb R}}
\newcommand{\C}{{\mathbb C}}

\def\e{\varepsilon}

\newtheorem{theorem}{Theorem}

\newtheorem{lemma}{Lemma}
\newtheorem{prop}{Proposition}
\newtheorem{coro}{Corollary}

\sloppy

\begin{document}
\title[Quantum Dynamics via Complex Analysis Methods]{Quantum Dynamics via Complex Analysis Methods: General Upper Bounds Without Time-Averaging
and Tight Lower Bounds for the Strongly Coupled Fibonacci
Hamiltonian}

\author[D.\ Damanik]{David Damanik}

\address{Department of Mathematics, Rice University, Houston, TX~77005, USA}

\email{damanik@rice.edu}

\author[S.\ Tcheremchantsev]{Serguei Tcheremchantsev}

\address{UMR 6628--MAPMO, Universit\'{e} d'Orl\'{e}ans, B.P.~6759, F-45067 Orl\'{e}ans Cedex,
France}

\email{serguei.tcherem@labomath.univ-orleans.fr}

\thanks{D.\ D.\ was supported in part by NSF grant
DMS--0653720.}

\begin{abstract}
We develop further the approach to upper and lower bounds in
quantum dynamics via complex analysis methods which was introduced
by us in a sequence of earlier papers. Here we derive upper bounds
for non-time averaged outside probabilities and moments of the
position operator from lower bounds for transfer matrices at
complex energies. Moreover, for the time-averaged transport
exponents, we present improved lower bounds in the special case of
the Fibonacci Hamiltonian. These bounds lead to an optimal
description of the time-averaged spreading rate of the fast part
of the wavepacket in the large coupling limit. This provides the
first example which demonstrates that the time-averaged spreading
rates may exceed the upper box-counting dimension of the spectrum.
\end{abstract}

\maketitle

\section{Introduction}

This paper studies the long-time behavior of the solution to the
time-dependent Schr\"odinger equation, $i \partial_t \psi = H
\psi$, in the Hilbert space $\ell^2(\Z)$ with an initially
localized state, say $\psi(0) = \delta_0$. More precisely, the
Schr\"odinger operator is of the form
\begin{equation}\label{f.oper}
[H u](n) = u(n+1) + u(n-1) + V(n) u(n),
\end{equation}
where $V : \Z \to \R$ is the potential, and the solution is given
by
\begin{equation}\label{schdyn}
\psi(t) = e^{-itH} \delta_0.
\end{equation}
The probability of finding the state in $\{ n \in \Z : n \ge N \}$
at time $t$ is given by
$$
P_r(N,t) = \sum_{n \ge N} | \langle e^{-itH} \delta_0 , \delta_n
\rangle |^2.
$$
Similarly,
$$
P_l(N,t) = \sum_{n \le -N} | \langle e^{-itH} \delta_0 , \delta_n
\rangle |^2
$$
is equal to the probability of finding the state in $\{ n \in \Z :
n \le -N \}$ at time $t$.

It is in general a hard problem to bound these so-called outside
probabilities from above. Our recent paper \cite{dt3} introduced a
way of estimating their time-averages from above in terms of the
norms of transfer matrices at complex energies. In this paper we
show how similar estimates can be obtained without the need to
take time-averages.

To state these results, let us recall the definition of the
transfer matrices. For $n \in \Z$ and $z \in \C$, define the
transfer matrix $\Phi(n,z)$ by
\begin{equation}
\Phi(n,z) = \begin{cases} T(n,z) \cdots T(1,z) & n \ge 1, \\
\mathrm{Id} & n = 0,
\\ [T(n+1,z)]^{-1} \cdots [T(0,z)]^{-1} & n \le -1, \end{cases}
\label{Phi}
\end{equation}
where
$$
T(m,z) = \left( \begin{array}{cr} z - V(m) & -1 \\ 1 & 0
\end{array} \right).
$$
The definition is such that $u : \Z \to \C$ solves
\begin{equation}\label{f.eve}
u(n+1) + u(n-1) + V(n)u(n) = z u(n)
\end{equation}
if and only if
$$
\left( \begin{array}{c} u(n+1) \\ u(n) \end{array} \right) =
\Phi(n,z) \left(
\begin{array}{c} u(1) \\ u(0) \end{array} \right)
$$
for every $n \in \Z$.

\begin{theorem}\label{t.main}
Suppose $H$ is given by \eqref{f.oper}, where $V$ is a bounded
real-valued function, and $K \ge 4$ is such that $\sigma (H)
\subseteq [-K+1,K-1]$. Then, the outside probabilities can be
bounded from above in terms of transfer matrix norms as follows:
\begin{align*}
P_r(N,t) & \lesssim \exp (-c N) + t^4 \int_{-K}^K \left( \max_{0
\le n \le
N-1} \left\| \Phi \left( n,E + i t^{-1} \right) \right\|^2 \right)^{-1} dE, \\
P_l(N,t) & \lesssim \exp (-c N) + t^4 \int_{-K}^K \left(
\max_{-N+1 \le n \le 0} \left\| \Phi \left( n,E + i t^{-1} \right)
\right\|^2 \right)^{-1} dE.
\end{align*}
The implicit constants depend only on $K$.
\end{theorem}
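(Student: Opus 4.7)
The plan is to represent $\langle e^{-itH}\delta_0, \delta_n\rangle$ as a Cauchy integral of the resolvent, deform the contour so that its horizontal sides sit at imaginary height $\pm t^{-1}$, and estimate each piece separately. The choice of height $t^{-1}$ is what keeps $|e^{-itz}|$ of order one on the whole contour, so no exponential factor in $t$ is incurred; this is the key idea that removes the need for time-averaging.

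By the holomorphic functional calculus,
\[
\langle e^{-itH}\delta_0, \delta_n\rangle = -\frac{1}{2\pi i} \oint_\Gamma e^{-itz}\, G(0,n;z)\, dz,\qquad G(0,n;z) := \langle (H-z)^{-1}\delta_0, \delta_n\rangle,
\]
for any simple closed contour $\Gamma$ enclosing $\sigma(H)$ counterclockwise. I would take $\Gamma$ to be the boundary of the rectangle $[-K,K]\times[-t^{-1},t^{-1}]$. On the two vertical sides $\mathrm{Re}\,z=\pm K$ one has $\mathrm{dist}(z,\sigma(H))\ge 1$, so a standard Combes--Thomas bound gives $|G(0,n;z)|\lesssim e^{-cn}$ with constants depending only on $K$. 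Since each vertical side has length $2t^{-1}$ and $|e^{-itz}|\le e$ on $\Gamma$, their total contribution to $|\langle e^{-itH}\delta_0,\delta_n\rangle|$ is at most $Ct^{-1}e^{-cn}$, and after squaring and summing over $n\ge N$ this is absorbed into the $e^{-cN}$ term of the conclusion.

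On the horizontal sides, parametrize $z=E\pm it^{-1}$ with $E\in[-K,K]$. Self-adjointness yields $G(0,n;\bar z)=\overline{G(0,n;z)}$, so the top and bottom pieces combine into a single integral of $|G(0,n;E+it^{-1})|$ over $E\in[-K,K]$. Squaring via $(a+b)^2\le 2(a^2+b^2)$ (to split off the vertical remainder), applying Cauchy--Schwarz in $E$ to pass from $\bigl(\int_{-K}^K|G|\,dE\bigr)^2$ to $2K\int_{-K}^K|G|^2\,dE$, and then summing over $n\ge N$ and exchanging the order of summation and integration, I obtain
\[
P_r(N,t) \lesssim e^{-cN} + \int_{-K}^K \sum_{n\ge N}\bigl|G(0,n;E+it^{-1})\bigr|^2\, dE.
\]

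The remaining, and most technical, step is a Green's function bound of the form
\[
\sum_{n\ge N}|G(0,n;z)|^2 \lesssim \frac{(\mathrm{Im}\,z)^{-4}}{\max_{0\le n\le N-1}\|\Phi(n,z)\|^2},
\]
which, specialized to $\mathrm{Im}\,z=t^{-1}$, yields precisely the $t^4$ factor in the statement. To prove it, one writes $G(0,n;z)=f^+(n,z)\,f^-(0,z)/W(z)$ for $n\ge 0$, where $f^\pm$ are the Weyl solutions of $(H-z)u=zu$ lying in $\ell^2$ near $\pm\infty$ and $W$ is their Wronskian; one bounds $|W|^{-1}$ by a constant multiple of $(\mathrm{Im}\,z)^{-1}$, and controls the tail $\sum_{n\ge N}|f^+(n,z)|^2$ by exploiting the fact that the decay of the $\ell^2$ solution $f^+$ forces matching growth of $\|\Phi(n,z)\|$ on $0\le n\le N-1$. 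This is where the complex-analysis machinery developed in \cite{dt3} is used most heavily and is the main obstacle in the proof. The bound for $P_l(N,t)$ is obtained by the symmetric argument, with $f^-$ and the transfer matrix norms for $-N+1\le n\le 0$ playing the dual role.
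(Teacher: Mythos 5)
Your proposal follows essentially the same route as the paper: the Dunford/Cauchy integral representation, a contour whose horizontal side(s) sit at imaginary height $t^{-1}$ so that $|e^{-itz}|$ stays bounded, Combes--Thomas on the pieces at distance $\ge 1$ from the spectrum, Cauchy--Schwarz to arrive at $\int_{-K}^K \sum_{n \ge N} |G(0,n;E+it^{-1})|^2\,dE$, and finally the bound $\sum_{n\ge N}|G(0,n;z)|^2 \lesssim (\Im z)^{-4}\bigl(\max_{0\le n\le N-1}\|\Phi(n,z)\|^2\bigr)^{-1}$ imported from \cite{dt3}. The paper treats that last step exactly as you do, as a black box from \cite{dt3} (its only cosmetic difference is placing the bottom of the contour at $\Im z=-1$ and using Combes--Thomas there, where you instead use the reflection symmetry of the Green function), so your argument is a faithful reconstruction.
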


This result, which is the non-time averaged analogue of
\cite[Theorem~7]{dt3}, has a number of consequences akin to those
discussed in \cite{dt3}. For $p > 0$, consider the $p$-th moment
of the position operator,
$$
\langle |X|_{\delta_0}^p \rangle (t) = \sum_{n \in \Z} |n|^p |
\langle e^{-itH} \delta_0 , \delta_n \rangle |^2
$$
and the upper and lower transport exponents
$\beta^+_{\delta_0}(p)$ and $\beta^-_{\delta_0}(p)$, given,
respectively, by
$$
\beta^+_{\delta_0}(p)=\limsup_{t \to \infty} \frac{\log \langle
|X|_{\delta_0}^p \rangle (t) }{p \, \log t}.
$$
and
$$
\beta^-_{\delta_0}(p)=\liminf_{t \to \infty} \frac{\log \langle
|X|_{\delta_0}^p \rangle (t) }{p \, \log t}.
$$
Let us briefly discuss a connection with the outside
probabilities. Set
$$
P(N,t) = P_l(N,t) + P_r(N,t).
$$
Following \cite{GKT}, for $0 \le \alpha \le \infty$, define
\begin{equation}\label{smindef}
S^-(\alpha) = - \liminf_{t \to \infty} \frac{\log P(t^\alpha - 1,
t) }{\log t}
\end{equation}
and
\begin{equation}\label{spludef}
S^+(\alpha) = - \limsup_{t \to \infty} \frac{\log P(t^\alpha - 1,
t) }{\log t}.
\end{equation}
For every $\alpha$, $0 \le S^+ (\alpha) \le S^- (\alpha) \le
\infty$.

These numbers control the power decaying tails of the wavepacket.
In particular, the following critical exponents are of interest:
\begin{align}
\alpha_l^\pm & = \sup \{ \alpha \ge 0  :  S^\pm (\alpha)=0 \}, \label{aldef} \\
\alpha_u^\pm & = \sup \{ \alpha \ge 0  :  S^\pm (\alpha) < \infty
\}. \label{audef}
\end{align}
We have that $0 \le \alpha_l^- \le \alpha_u^- \le 1$, $0 \le
\alpha_l^+ \le \alpha_u^+ \le 1$, and also that $\alpha_l^- \le
\alpha_l^+$, $\alpha_u^- \le \alpha_u^+$. One can interpret
$\alpha_l^\pm$ as the (lower and upper) rates of propagation of
the essential part of the wavepacket, and $\alpha_u^\pm$ as the
rates of propagation of the fastest (polynomially small) part of
the wavepacket; compare \cite{GKT}. In particular, if
$\alpha>\alpha_u^+$, then $P(t^\alpha, t)$ goes to $0$ faster than
any inverse power of $t$. Since a ballistic upper bound holds in
our case (for any potential $V$), a slight modification of
\cite[Theorem 4.1]{GKT} yields
$$
\lim_{p \to 0} \beta_{\delta_0}^\pm (p) = \alpha_l^\pm
$$
and
$$
\lim_{p \to \infty} \beta_{\delta_0}^\pm (p) = \alpha_u^\pm.
$$
In particular, since $\beta^\pm_{\delta_0} (p)$ are nondecreasing,
we have that
\begin{equation}\label{betaalphabound}
\beta_{\delta_0}^\pm (p) \le \alpha_u^\pm \quad \text{ for every }
p > 0.
\end{equation}

\begin{coro}\label{c.main}
Suppose $H$ is given by \eqref{f.oper}, where $V$ is a bounded
real-valued function, and $K \ge 4$ is such that $\sigma (H)
\subseteq [-K+1,K-1]$. Suppose that, for some $C \in (0,\infty)$
and $\alpha \in (0,1)$, we have
\begin{equation}\label{assumeright}
\int_{-K}^K \left( \max_{1 \le n \le C t^\alpha} \left\| \Phi
\left( n, E + i t^{-1} \right) \right\|^2 \right)^{-1} dE =
O(t^{-m})
\end{equation}
and
\begin{equation}\label{assumeleft}
\int_{-K}^K \left( \max_{1 \le -n \le C t^\alpha} \left\| \Phi
\left( n, E + i t^{-1} \right) \right\|^2 \right)^{-1} dE =
O(t^{-m})
\end{equation}
for every $m \ge 1$. Then
\begin{equation}\label{apubound}
\alpha_u^+ \le \alpha.
\end{equation}
In particular,
\begin{equation}\label{bppbound}
\beta^+_{\delta_0} (p) \le \alpha \quad \text{ for every } p > 0.
\end{equation}
\end{coro}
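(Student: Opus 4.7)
The plan is to apply Theorem~\ref{t.main} with $N$ chosen of size $\sim C t^\alpha$ and to show that both terms on the right-hand side decay faster than any inverse power of $t$; from there the conclusion about $\alpha_u^+$ follows from the definitions, and the conclusion about $\beta^+_{\delta_0}$ is then immediate from \eqref{betaalphabound}.

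More concretely, I would choose $N = \lfloor C t^\alpha \rfloor + 1$ so that $\{1,\dots,\lfloor C t^\alpha\rfloor\} \subseteq \{0,1,\dots,N-1\}$, which forces
$$
\max_{0 \le n \le N-1} \left\| \Phi(n, E + i t^{-1}) \right\|^2 \;\ge\; \max_{1 \le n \le Ct^\alpha} \left\| \Phi(n, E + i t^{-1}) \right\|^2.
$$
Inserting this into Theorem~\ref{t.main} and using hypothesis \eqref{assumeright} yields
$$
P_r(N,t) \;\lesssim\; \exp(-cCt^\alpha) + t^4 \cdot O(t^{-m})
$$
for every $m \ge 1$. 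Since $\alpha > 0$, the first term is rapidly decaying, and the second, by taking $m$ arbitrarily large, also decays faster than any inverse power of $t$. The same argument using \eqref{assumeleft} gives the analogous bound for $P_l(N,t)$, so $P(N,t) = O(t^{-k})$ for every $k \ge 1$.

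Now fix any $\alpha' > \alpha$. For $t$ large enough, $t^{\alpha'} - 1 \ge N = \lfloor Ct^\alpha\rfloor + 1$, and since $P_r(\cdot,t)$ and $P_l(\cdot,t)$ are nonincreasing in their first argument, we obtain $P(t^{\alpha'} - 1, t) \le P(N,t) = O(t^{-k})$ for every $k \ge 1$. From definition \eqref{spludef} this gives $S^+(\alpha') = \infty$, and since this holds for all $\alpha' > \alpha$, definition \eqref{audef} yields \eqref{apubound}. The bound \eqref{bppbound} then follows immediately from \eqref{betaalphabound}.

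There is no real obstacle here: the corollary is essentially a bookkeeping consequence of Theorem~\ref{t.main} combined with the definitions of $S^+$ and $\alpha_u^+$. The only minor technical point worth checking carefully is that the index range $\{0,\dots,N-1\}$ in Theorem~\ref{t.main} covers $\{1,\dots,Ct^\alpha\}$ from the corollary's hypothesis, which is why one takes $N$ slightly larger than $Ct^\alpha$; the extra inclusion of $n=0$ (where $\Phi = \mathrm{Id}$) only makes the maximum larger, hence its reciprocal smaller, which is the direction we need.
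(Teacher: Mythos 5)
Your proof is correct and follows essentially the same route as the paper: apply Theorem~\ref{t.main} with $N$ of order $C t^\alpha$, use \eqref{assumeright} and \eqref{assumeleft} to conclude that $P(N,t)$ decays faster than any inverse power of $t$, deduce \eqref{apubound} from the definitions \eqref{spludef} and \eqref{audef}, and obtain \eqref{bppbound} from \eqref{betaalphabound}. The only difference is cosmetic bookkeeping in the choice of the integer cutoff $N$, which you handle correctly.
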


Theorem~\ref{t.main} and Corollary~\ref{c.main} provide the
analogues of the central general results from \cite{dt3} for
quantities that are not time-averaged. For comparison purposes,
let us introduce the following notation. If $f(t)$ is a function
of $t > 0$ and $T > 0$ is given, we denote the time-averaged
function at $T$ by $\langle f \rangle (T)$:
$$
\langle f \rangle (T) = \frac{2}{T} \int_0^{\infty} e^{-2t/T} f(t)
\, dt.
$$
Thus, we consider, for example, $\langle P_l(N,\cdot) \rangle
(T)$, $\langle P_r(N,\cdot) \rangle (T)$, and $\langle \langle
|X|_{\delta_0}^p \rangle \rangle (T)$. For these time-averaged
quantities, we can then define the transport exponents $\langle
\beta^\pm_{\delta_0}(p) \rangle$ and their limiting values
$\langle \alpha_l^\pm \rangle$ and $\langle \alpha_l^\pm \rangle$
in the same way as above. For example, if in the formulation of
Corollary~\ref{c.main}, we replace $\alpha_u^+$ by $\langle
\alpha_u^+ \rangle$ and $\beta^+_{\delta_0} (p)$ by $\langle
\beta^+_{\delta_0} (p) \rangle$, we obtain the assertion of
\cite[Theorem~1]{dt3}.

\bigskip

Let us now turn to a discussion of a special case. The Fibonacci
Hamiltonian is the discrete one-dimensional Schr\"odinger operator
in $\ell^2(\Z)$ as in \eqref{f.oper} with potential $V : \Z \to
\R$ is given by
\begin{equation}\label{fibpot}
V(n) = \lambda \chi_{[1-\phi^{-1},1)}(n \phi^{-1} \! +
\theta\!\!\! \mod 1).
\end{equation}
Here, $\lambda > 0$ is the coupling constant, $\phi$ is the golden
mean,
$$
\phi = \frac{\sqrt{5}+1}{2}
$$
and $\theta \in [0,1)$ is the phase. This is the most prominent
model of a one-dimensional quasicrystal; compare the survey
articles \cite{d,su3}. It is known that the spectrum of $H$ is
independent of $\theta$ \cite{bist}; let us denote it by
$\Sigma_\lambda$. Moreover, the Lebesgue measure of
$\Sigma_\lambda$ is zero \cite{su2} and all spectral measures are
purely singular continuous \cite{dl}.

The quantum evolution with $H$ given by the Fibonacci Hamiltonian
has been studied in many papers. It had long been expected to be
anomalous in the sense that it is markedly different from the
behavior in the periodic case (leading to ballistic transport) and
the random case (leading to dynamical localization); see, for
example, papers in the physics literature by Abe and Hiramoto
\cite{ah,ha}. Lower bounds, showing in particular the absence of
dynamical localization, were shown in
\cite{d1,dkl,dst,dt1,dt2,jl,kkl}. There are far fewer paper
establishing upper bounds for this model, especially for
quantities like the moments of the position operator. Killip et
al.\ showed for $\theta = 0$ and $\lambda \ge 8$ that the slow
part of the wavepacket does not move ballistically \cite{kkl}.
Their result was extended to general $\theta$ in \cite{d2}. The
first result establishing for $\lambda \ge 8$ bounds from above
for the whole wavepacket, and hence quantities like the moments of
the position operator, is contained in \cite{dt3}. In that paper
only the case $\theta = 0$ is studied but it is remarked that the
ideas from \cite{d2} will allow one to treat general $\theta$'s.

The paper \cite{dt3} introduced a new tool, the complex trace map,
that allows one to use complex analysis methods in a context where
real analysis methods were used earlier. This was important in our
proof of non-trivial upper bounds for the (time-averaged)
transport exponents. Given the analysis of \cite{dt3} and
Corollary~\ref{c.main} above, which strengthens
\cite[Theorem~1]{dt3}, we obtain the following strengthening of
\cite[Theorem~3]{dt3}.

\begin{theorem}\label{t.main2}
Consider the Fibonacci Hamiltonian, that is, the operator
\eqref{f.oper} with potential \eqref{fibpot}. Assume that $\lambda
\ge 8$ and let
$$
\alpha(\lambda) = \frac{2 \log \phi}{\log S_l(\lambda)}.
$$
with
\begin{equation}\label{f.xilambda}
S_l(\lambda) = \frac{1}{2} \left( (\lambda - 4) + \sqrt{(\lambda -
4)^2 - 12} \right).
\end{equation}
Then, $\alpha_u^+ \le \alpha (\lambda)$, and hence $\beta^+ (p)
\le \alpha (\lambda)$ for every $p > 0$.
\end{theorem}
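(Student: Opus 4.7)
The plan is to obtain Theorem~\ref{t.main2} as an immediate application of Corollary~\ref{c.main} with the choice $\alpha = \alpha(\lambda)$. Because Corollary~\ref{c.main} is precisely the non-time-averaged analogue of \cite[Theorem~1]{dt3}, which is the main general input to \cite[Theorem~3]{dt3}, and because the Fibonacci-specific transfer matrix estimates of \cite{dt3} are intrinsic statements about $\|\Phi(n, E + it^{-1})\|$ with no time-averaging in them, the conclusion follows once those same estimates are fed into Corollary~\ref{c.main} in place of \cite[Theorem~1]{dt3}.

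Concretely, I would extract from the proof of \cite[Theorem~3]{dt3} the super-polynomial decay estimate
\begin{equation*}
\int_{-K}^K \left( \max_{1 \le n \le C t^{\alpha(\lambda)}} \left\| \Phi(n, E + i t^{-1}) \right\|^2 \right)^{-1} dE = O(t^{-m}) \quad \text{for every } m \ge 1,
\end{equation*}
together with its left-sided companion. Substituting these into Corollary~\ref{c.main} with $\alpha = \alpha(\lambda)$ yields $\alpha_u^+ \le \alpha(\lambda)$ directly, and then the bound $\beta_{\delta_0}^+(p) \le \alpha(\lambda)$ for all $p > 0$ follows from \eqref{betaalphabound}.

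For orientation, the above integral estimate is proved in \cite{dt3} via the complex trace map. One extends the classical Fibonacci trace recursion from real to complex energies, and for $\lambda \ge 8$ exploits uniform hyperbolicity of the real trace map on its non-wandering set (the dynamical model of $\Sigma_\lambda$) to transfer lower bounds on complex trace-map orbits into lower bounds on $\|\Phi(F_k, E + i t^{-1})\|$ over Fibonacci blocks of length $F_k \sim \phi^k$. The relevant expansion rate is $S_l(\lambda)$, the larger eigenvalue of the linearization of the trace map at its periodic orbits, which accounts for formula \eqref{f.xilambda}; the exponent $\alpha(\lambda) = 2 \log \phi / \log S_l(\lambda)$ is exactly what balances $F_k \sim t^{\alpha}$ against the resulting lower bound $\|\Phi(F_k, \cdot)\|^2 \gtrsim S_l(\lambda)^{2k}$, with the additional super-polynomial gain coming from the rapid shrinkage of the sub-level set of energies where the trace-map lower bound fails.

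The main obstacle --- construction of the complex trace map, its quantitative hyperbolic analysis for $\lambda \ge 8$, and the passage from trace-map escape rates to the displayed integral estimate --- lies entirely within \cite{dt3} and is not revisited here. Given those results, the present theorem is an immediate corollary of our Corollary~\ref{c.main}, the strict improvement over \cite[Theorem~3]{dt3} coming solely from the removal of time-averaging in the underlying general transfer-matrix-to-transport correspondence.
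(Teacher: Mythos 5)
Your proposal is correct and is essentially identical to the paper's own (implicit) proof: the paper simply observes that Theorem~\ref{t.main2} follows by combining Corollary~\ref{c.main} with the Fibonacci-specific transfer-matrix estimates already established in \cite{dt3}, exactly as you do. The only quibble is in your expository third paragraph: in \cite{dt3} the constant $S_l(\lambda)$ arises from the Koebe-distortion analysis of the sets $\sigma_k^\delta$ (cf.\ Proposition~\ref{distortion} here) rather than from a linearization of the trace map at periodic orbits, but since you correctly defer that entire analysis to \cite{dt3}, this does not affect the validity of the argument.
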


As a byproduct of our study of the complex trace map in
\cite{dt3}, we established a distortion result that is useful to
bound the transport exponents from either side. Since \cite{dt3}
focused on quantum dynamical upper bounds, we present the
application of the distortion result to quantum dynamical lower
bounds here. For this result, we still need to consider
time-averaged quantities!

\begin{theorem}\label{t.main3}
Consider the Fibonacci Hamiltonian, that is, the operator
\eqref{f.oper} with potential \eqref{fibpot}. Suppose $\lambda >
\sqrt{24}$ and let
$$
S_u(\lambda) = 2\lambda + 22.
$$
We have
\begin{equation}\label{betalower}
\langle \beta^-_{\delta_0} (p) \rangle \ge \frac{2 \log \phi}{\log
S_u(\lambda)} - \frac{2}{p}\left( 1 + \frac{C \log \lambda}{\log
S_u(\lambda)} \right)
\end{equation}
for a suitable constant $C$, and therefore
\begin{equation}\label{alphalower}
\langle \alpha_u^- \rangle \ge \frac{2 \log \phi}{\log
S_u(\lambda)}.
\end{equation}
\end{theorem}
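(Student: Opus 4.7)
The plan is to convert an upper bound on the growth rate of transfer matrices at complex energies into a lower bound on the time-averaged spreading rate, using the framework developed in our earlier papers \cite{dt1, dt2} and the complex trace map machinery from \cite{dt3}. The constant $S_u(\lambda) = 2\lambda + 22$ should appear as an upper bound on the (two-step) growth rate of the Fibonacci traces $x_k(z) = \tfrac{1}{2}\mathrm{tr}\,\Phi(F_k, z)$ at points $z$ near the spectrum $\Sigma_\lambda$, which in turn controls $\|\Phi(F_k, z)\|$ through the Cayley--Hamilton recursion for $\Phi(F_k, z)$ itself.

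First, I would establish the real trace map bound. The recursion $x_{k+1}(z) = 2 x_k(z) x_{k-1}(z) - x_{k-2}(z)$ with explicit polynomial initial data, combined with the classical bound that the orbit $\{x_k(E)\}$ stays in a compact set for $E \in \Sigma_\lambda$, gives $\|\Phi(F_k, E)\| \lesssim S_u(\lambda)^k$ for every $E \in \Sigma_\lambda$ when $\lambda > \sqrt{24}$ (the latter condition ensures the hyperbolic regime where the Sütő-type analysis applies). The explicit expression $S_u(\lambda) = 2\lambda + 22$ is the worst-case two-step growth rate in the recursion, taking into account the uniform bound on $|x_k(E)|$ on the spectrum together with the bound $|x_0(E)|, |x_1(E)| = O(\lambda)$ coming from the initial data.

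Next, and this is the crucial point, I would invoke the distortion result of \cite{dt3} to propagate this bound to complex $z$. At scale $k \sim \log T / \log S_u(\lambda)$, and for $z = E + i/T$ with $E$ in a set $\Lambda(T) \subset [-K, K]$ whose Lebesgue measure stays bounded below (uniformly in $T$), the complex traces $x_k(z)$ remain comparable to nearby real traces, so that $\|\Phi(F_k, E + i/T)\| \lesssim S_u(\lambda)^k$ continues to hold on $\Lambda(T)$. Feeding this into the time-averaged dynamical lower bound of \cite{dt1} (an integral lower bound for $\sum_{|n|\ge N}\langle|\langle \delta_n, e^{-itH}\delta_0\rangle|^2\rangle(T)$ in terms of $\int (\max_{|n| \le N}\|\Phi(n, E + i/T)\|^2)^{-1}\, dE$) and choosing $N \sim F_k \sim \phi^k$ together with $T \sim S_u(\lambda)^k$ yields $N \sim T^{\alpha(\lambda)}$ with $\alpha(\lambda) = 2 \log\phi/\log S_u(\lambda)$, the factor $2$ reflecting the square on the transfer matrix norm. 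Consequently, for any $\e > 0$, $\langle P(T^{\alpha(\lambda) - \e}, \cdot)\rangle(T)$ does not decay polynomially in $T$, giving \eqref{alphalower}.

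To pass from \eqref{alphalower} to the quantitative moment bound \eqref{betalower}, I would split $\langle\langle|X|_{\delta_0}^p\rangle\rangle(T)$ at $N = T^\gamma$ for $\gamma$ slightly below $\alpha(\lambda)$, using the outside-probability lower bound just obtained to control the contribution from $|n| > N$; optimizing $\gamma$ produces the $O(1/p)$ loss, and tracking the prefactors arising from the trace map initial data (which carry explicit factors of $\lambda$) gives the stated error $\tfrac{2}{p}\bigl(1 + C\log\lambda/\log S_u(\lambda)\bigr)$. The main obstacle will be the measure estimate for the good set $\Lambda(T)$ in the complex step: one must combine the distortion lemma of \cite{dt3} with a hyperbolicity/escape argument for the complex trace map to guarantee that a definite fraction of $[-K, K]$ remains in the set where complex traces mimic real ones, uniformly down to the scale $1/T$. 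This is precisely the combined real--complex trace analysis set up in \cite{dt3}, and applying it on the correct scale to extract a lower, rather than upper, dynamical bound is the heart of the argument.
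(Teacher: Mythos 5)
Your proposal has the right skeleton (Parseval-type lower bound for $\langle P(N,\cdot)\rangle(T)$ plus transfer matrix upper bounds at complex energies $E+i/T$ near the spectrum, obtained via the complex trace map and the distortion result of \cite{dt3}), but two of your key steps are wrong, and the second one costs you exactly the factor of $2$ in the main term. First, the ``good set $\Lambda(T)$ of measure bounded below uniformly in $T$'' cannot exist: the set of $E$ with $E+i/T$ in $\sigma_k^\delta$ shrinks to a neighborhood of $\Sigma_\lambda$, which has zero Lebesgue measure, so its measure tends to $0$. The paper instead integrates over a \emph{single} interval $I_k$ of length comparable to the inner radius $r_k$ of one connected component of $\sigma_k^\delta$; the resulting loss of one power of $T$ is harmless because $\alpha_u^-$ only requires that $\langle P(T^{1/s},\cdot)\rangle(T)$ not decay faster than some fixed power of $T$, and the loss is absorbed into the $O(1/p)$ error term in \eqref{betalower}.

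Second, and more seriously, you misplace the role of $S_u(\lambda)$ and hence get the wrong exponent. With your scaling $T\sim S_u(\lambda)^k$ and $N\sim\phi^k$ you obtain $N\sim T^{\log\phi/\log S_u(\lambda)}$ --- half the claimed exponent; the ``square on the transfer matrix norm'' affects the size of the integrand, not the relation between $N$ and $T$, so it cannot supply the missing factor of $2$. In the paper, $S_u(\lambda)$ does not bound $\|\Phi(F_k,\cdot)\|$ at all; it enters through the distortion result (Proposition~\ref{distortion}), which says the component of $\sigma_k^\delta$ around a root $z_k^{(j)}$ contains a ball of radius $c_\delta S_u(\lambda)^{-m_k^{(j)}}$, where $m_k^{(j)}$ counts the indices $l<k$ with $|x_l(z_k^{(j)})|\le 1$. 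The combinatorial input you are missing is Lemma~\ref{supplemm}: there is always a root with $m_k^{(j)}=\lceil k/2\rceil$, so one can choose a component containing a ball of radius $r_k\sim S_u(\lambda)^{-k/2}$, take $T\sim r_k^{-1}\sim S_u(\lambda)^{k/2}$, and get $N\sim\phi^k\sim T^{2\log\phi/\log S_u(\lambda)}$. On that component the transfer matrices are bounded not by $S_u(\lambda)^k$ but by the power law $CN^\gamma$ with $\gamma=O(\log\lambda)$ (Proposition~\ref{powertm}); this $\gamma$, not ``prefactors from the initial data,'' is the source of the $C\log\lambda$ in the error term of \eqref{betalower}.
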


\noindent\textit{Remark.} In the special case $\theta = 0$, the
lower bound for $\langle \beta^\pm_{\delta_0} (p) \rangle$ can be
improved; see Theorem~\ref{t.main4} at the end of this paper.

\bigskip

The best previously known lower bound for $\langle \alpha_u^-
\rangle$ in the large coupling regime was obtained in \cite{DEGT}.
It reads
\begin{equation}\label{alphadim}
\langle \alpha_u^- \rangle \ge \dim_B^\pm (\Sigma_\lambda).
\end{equation}
Here, $\dim_B^\pm (\Sigma_\lambda)$ denotes the upper/lower box
counting dimension of $\Sigma_\lambda$. Such a bound holds
whenever the transfer matrices are polynomially bounded on the
spectrum, which in particular holds in the Fibonacci case.

The authors of \cite{DEGT} performed a detailed study of these
dimensions. A particular consequence of their study is the
following asymptotic statement,
\begin{equation}\label{dimasymp}
\lim_{\lambda \to \infty} \dim_B^\pm ( \Sigma_\lambda ) \cdot \log
\lambda = f^\# \log \phi,
\end{equation}
where $f^\#$ is an explicit constant (the unique maximum of an
explicit function) that is roughly given by
$$
f^\# \approx 1.83156.
$$

On the other hand, for $\lambda \ge 8$, \cite[Theorem~3]{dt3} (see
also Theorem~\ref{t.main2} above) gives the following upper bound
for $\langle \alpha_u^+ \rangle$,
\begin{equation}\label{alphaupper}
\langle \alpha_u^+ \rangle \le  \frac{2 \log \phi}{\log
S_l(\lambda)}.
\end{equation}
Thus, combining \eqref{alphalower} and \eqref{alphaupper}, we find
the following exact asymptotic result.

\begin{coro}
For the Fibonacci Hamiltonian, we have
\begin{equation}\label{alphaasymp}
\lim_{\lambda \to \infty} \langle \alpha_u^\pm \rangle \cdot \log
\lambda = 2 \log \phi.
\end{equation}
\end{coro}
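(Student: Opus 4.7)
The plan is to prove \eqref{alphaasymp} as a direct consequence of \eqref{alphalower} and \eqref{alphaupper} combined with a squeeze argument. First I would record the universal inequality $\langle \alpha_u^- \rangle \le \langle \alpha_u^+ \rangle$, which follows from the corresponding $\liminf \le \limsup$ comparison in the time-averaged analogues of the definitions \eqref{aldef}--\eqref{audef}. Combining this with \eqref{alphalower} and \eqref{alphaupper}, both valid for all sufficiently large $\lambda$ (say $\lambda \ge 8$, which exceeds $\sqrt{24}$), yields the two-sided sandwich
$$\frac{2 \log \phi}{\log S_u(\lambda)} \le \langle \alpha_u^- \rangle \le \langle \alpha_u^+ \rangle \le \frac{2 \log \phi}{\log S_l(\lambda)}.$$

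Next I would multiply through by $\log \lambda$ and show that the two outer expressions both converge to $2 \log \phi$ as $\lambda \to \infty$. This reduces to verifying $\log S_l(\lambda)/\log \lambda \to 1$ and $\log S_u(\lambda)/\log \lambda \to 1$. The case of $S_u$ is immediate from $S_u(\lambda) = 2\lambda + 22 = \lambda\bigl(2 + o(1)\bigr)$. For $S_l$, I would rewrite \eqref{f.xilambda} as $S_l(\lambda) = \frac{1}{2}(\lambda - 4)\bigl(1 + \sqrt{1 - 12(\lambda - 4)^{-2}}\bigr)$ and observe that the second factor tends to $2$, so that $S_l(\lambda) \sim \lambda$. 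In both cases $\log S(\lambda) = \log \lambda + O(1)$ and the ratio tends to $1$.

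The squeeze theorem then gives $\langle \alpha_u^\pm \rangle \cdot \log \lambda \to 2 \log \phi$, which is \eqref{alphaasymp}. There is no real obstacle here: all the substantive content sits in Theorems~\ref{t.main2} and \ref{t.main3}, and the corollary is a short asymptotic computation. The only subtlety worth flagging is that one must pair the lower bound with $\langle \alpha_u^- \rangle$ and the upper bound with $\langle \alpha_u^+ \rangle$, so that the sandwich simultaneously encloses both quantities and the limit statement applies to both signs.
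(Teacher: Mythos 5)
Your proposal is correct and is essentially the paper's own argument: the corollary is stated there as an immediate consequence of combining \eqref{alphalower} and \eqref{alphaupper} with the ordering $\langle \alpha_u^- \rangle \le \langle \alpha_u^+ \rangle$, followed by the observation that $\log S_l(\lambda)$ and $\log S_u(\lambda)$ are both $\log \lambda + O(1)$.
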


A particular consequence is that, as $\lambda \to \infty$, the
limit behavior is the same for both $\langle \alpha_u^+ \rangle$
and $\langle \alpha_u^- \rangle$. It would be of interest to show
that these quantities are in fact equal for finite $\lambda$.

Comparing the asymptotic results \eqref{dimasymp} and
\eqref{alphaasymp}, we see that the Fibonacci Hamiltonian at large
coupling may serve as an example where the inequality in
\eqref{alphadim} is strict. This result is also relevant to a
question raised by Last in \cite[Section~9]{l} who asked whether
the upper box counting dimension of the spectrum could serve as an
upper bound for dynamics and suggested that the expected answer is
negative in general. See \cite{GM,WA} for numerical results
providing evidence supporting this expectation.

Let us put the recent results for the Fibonacci Hamiltonian in
perspective. The present paper and \cite{DEGT} result from an
attempt to describe dimensions and transport exponents
\textit{exactly}. This is certainly a challenging problem and we
have precise results only in an asymptotic regime; compare
\eqref{dimasymp} and \eqref{alphaasymp}. It would certainly be of
interest to prove exact results also for fixed finite $\lambda$.
Moreover, the behavior in the small coupling regime is poorly
understood. Many results that hold for $\lambda$ above some
critical coupling do not obviously extend to smaller values since
parts of the proofs break down. There is a definite need for new
insights in order to prove results at small coupling.

\section{Upper Bounds for Outside Probabilities Without
Time-Averaging}\label{s.2}

Our goal in this section is to prove Theorem~\ref{t.main} and
Corollary~\ref{c.main}. In particular, we will see how a specific
consequence of the Dunford functional calculus allows us to
replace the Parseval formula, which was a crucial ingredient in
several earlier papers on quantum dynamical bounds
\cite{dst,dt1,dt2,dt3,GKT,kkl}. This observation is the key to
obtaining upper bounds for non-averaged quantities.

\begin{lemma}\label{l.1}
We have
$$
\langle e^{-itH} \delta_0 , \delta_n \rangle = - \frac{1}{2 \pi i}
\int_\Gamma e^{-itz} \langle (H - z)^{-1} \delta_0 , \delta_n
\rangle \, dz
$$
for every $n \in \Z$, $t \in \R$, and positively oriented simple
closed contour $\Gamma$ in $\C$ that is such that the spectrum of
$H$ lies inside $\Gamma$.
\end{lemma}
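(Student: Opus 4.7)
The statement is exactly the matrix-element form of the Dunford (Riesz) functional calculus applied to the entire function $f(z)=e^{-itz}$ and the bounded self-adjoint operator $H$ on $\ell^2(\Z)$, so the plan is to invoke that calculus and then pair with $\delta_n$.

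First I would recall the starting point: since $V$ is bounded, $H$ is a bounded operator, $\sigma(H)$ is a compact subset of $\R$, and the resolvent $z \mapsto (H-z)^{-1}$ is an operator-valued analytic function on $\C\setminus\sigma(H)$. For any function $f$ analytic in an open neighborhood of $\sigma(H)$ (which $f(z)=e^{-itz}$ certainly is, being entire), the Dunford functional calculus defines
$$
f(H) = -\frac{1}{2\pi i}\int_\Gamma f(z)\,(H-z)^{-1}\,dz,
$$
where $\Gamma$ is any positively oriented simple closed contour in $\C$ enclosing $\sigma(H)$, the integral converging in operator norm. (The minus sign is the conversion between the conventions $(H-z)^{-1}$ and $(zI-H)^{-1}$.) For our $H$ this recovers $e^{-itH}$ because both sides agree with the holomorphic functional calculus, which for self-adjoint $H$ coincides with the usual Borel calculus on $f\big|_{\sigma(H)}$.

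Next I would apply this identity to the vector $\delta_0$, using that the norm-convergent integral passes through the bounded linear map $T \mapsto T\delta_0$ to yield
$$
e^{-itH}\delta_0 = -\frac{1}{2\pi i}\int_\Gamma e^{-itz}(H-z)^{-1}\delta_0\,dz,
$$
an $\ell^2(\Z)$-valued Bochner integral. Finally I would pair with $\delta_n$: continuity of the inner product (a bounded linear functional in the first slot) commutes with the Bochner integral, producing
$$
\langle e^{-itH}\delta_0,\delta_n\rangle = -\frac{1}{2\pi i}\int_\Gamma e^{-itz}\langle (H-z)^{-1}\delta_0,\delta_n\rangle\,dz,
$$
which is precisely the claim.

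There is essentially no hard step; the only thing to verify is that the hypotheses of the Dunford calculus are met (entire $f$, bounded $H$, $\Gamma$ encircling $\sigma(H)$) and that the interchange of inner product and integral is justified, both of which are routine once one notes that the integrand $z\mapsto e^{-itz}(H-z)^{-1}$ is continuous on the compact contour $\Gamma$ in operator norm. The main content of the lemma is therefore conceptual rather than technical: it provides the analytic substitute for the Parseval/Plancherel identity used in \cite{dt3}, opening the door to non-time-averaged estimates by letting $\Gamma$ be deformed to pass close to $\R$ at height $t^{-1}$ in the subsequent application.
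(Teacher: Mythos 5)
Your proposal is correct and is exactly the paper's approach: the paper proves the lemma by simply citing the Dunford functional calculus, and your argument just spells out the standard details (holomorphic calculus for the bounded operator $H$ applied to the entire function $e^{-itz}$, then commuting the norm-convergent contour integral with the inner product). Nothing further is needed.
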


\begin{proof}
This is a consequence of the so-called Dunford functional
calculus; see \cite{dun} and also \cite{dunsch,reesim}.
\end{proof}

\begin{lemma}\label{l.2}
Suppose $H$ is given by \eqref{f.oper}, where $V$ is a bounded
real-valued function, and $K \ge 4$ is such that $\sigma (H)
\subseteq [-K+1,K-1]$. Then,
\begin{align*}
P_r(N,t) & \lesssim \exp (-c N) + \int_{-K}^K \sum_{n \ge N}  |
\langle (H - E - i t^{-1})^{-1} \delta_0 , \delta_n
\rangle |^2 dE, \\
P_l(N,t) & \lesssim \exp (-c N) + \int_{-K}^K \sum_{n \le -N}  |
\langle (H - E - i t^{-1})^{-1} \delta_0 , \delta_n \rangle |^2
dE.
\end{align*}
\end{lemma}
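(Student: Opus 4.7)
The natural choice of contour in Lemma~\ref{l.1} is the rectangle $\Gamma$ with corners $\pm K \pm it^{-1}$; since $\sigma(H) \subseteq [-K+1,K-1]$, this contour encloses the spectrum for every $t > 0$. Applying Lemma~\ref{l.1} and decomposing $\Gamma$ into its two horizontal and two vertical sides, I would estimate the four contributions to $\langle e^{-itH}\delta_0,\delta_n\rangle$ separately, then square and sum over $n \ge N$.

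On the vertical sides $\operatorname{Re} z = \pm K$ one has $\operatorname{dist}(z,\sigma(H)) \ge 1$ uniformly in $t$, so a Combes--Thomas estimate gives $|\langle (H-z)^{-1}\delta_0,\delta_n\rangle| \le C e^{-c|n|}$ with $c, C$ depending only on $K$. Combined with the uniform bound $|e^{-itz}| \le e$ on $\Gamma$ and with the fact that each vertical side has length $2t^{-1}$ (bounded for $t \ge 1$, with $t < 1$ being trivial), the total vertical contribution to $\langle e^{-itH}\delta_0,\delta_n\rangle$ is $O(e^{-c|n|})$. Squaring and summing over $n \ge N$ yields the $\exp(-cN)$ term in the bound.

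For the horizontal sides I would parametrize $z = E \pm it^{-1}$ with $E \in [-K,K]$. The exponentials $|e^{-it(E \pm it^{-1})}| = e^{\mp 1}$ contribute harmless constants, and the reality of $H$ gives $\overline{\langle(H - E + it^{-1})^{-1}\delta_0,\delta_n\rangle} = \langle(H - E - it^{-1})^{-1}\delta_0,\delta_n\rangle$, so both horizontal contributions are, in modulus, controlled by $\int_{-K}^{K} |\langle(H - E - it^{-1})^{-1}\delta_0,\delta_n\rangle|\,dE$. Cauchy--Schwarz in $E$ loses a factor of $2K$ (absorbed into the implicit constant), after which squaring, summing over $n \ge N$, and exchanging sum and integral by Tonelli's theorem yields exactly the claimed bound. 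The estimate on $P_l$ follows by replacing $n \ge N$ with $n \le -N$ throughout.

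The only delicate point is that the Combes--Thomas estimate must be uniform in $t$ as $t \to \infty$; this is precisely why the contour is pushed out to $\operatorname{Re} z = \pm K$ rather than placed next to $\sigma(H)$. On those vertical lines the distance to $\sigma(H)$ remains $\ge 1$ regardless of how small $t^{-1}$ becomes, so the Combes--Thomas rate does not degenerate, and the vertical contribution can be absorbed into the innocuous $\exp(-cN)$ error. The subsequent estimate of the horizontal contribution, by contrast, is essentially a one-line application of Cauchy--Schwarz---this is where the Dunford representation of Lemma~\ref{l.1} buys us what the Parseval formula provided in the time-averaged setting of \cite{dt3}.
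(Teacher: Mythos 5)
Your argument is correct and follows the same strategy as the paper: Dunford's formula on a rectangular contour enclosing the spectrum, Combes--Thomas on the pieces that stay at distance $\ge 1$ from $\sigma(H)$, and Cauchy--Schwarz on the horizontal piece at height $t^{-1}$. The one genuine difference is the contour itself. You take the symmetric thin rectangle with corners $\pm K \pm i t^{-1}$, so your lower horizontal side sits at distance only $t^{-1}$ from the spectrum and cannot be handled by Combes--Thomas; you instead fold it onto the upper side via the conjugation symmetry $|\langle (H-E+it^{-1})^{-1}\delta_0,\delta_n\rangle| = |\langle (H-E-it^{-1})^{-1}\delta_0,\delta_n\rangle|$, which is legitimate because $H$ is self-adjoint with real matrix elements (so the Green's function is symmetric and $((H-\bar z)^{-1})^* = (H-z)^{-1}$). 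The paper instead uses an asymmetric rectangle whose lower side is pushed down to $\Im z = -1$, so that all three remaining pieces are at distance $\ge 1$ from $[-K+1,K-1]$ and are absorbed into the $\exp(-cN)$ term by Combes--Thomas alone; only the top side at height $t^{-1}$ requires Cauchy--Schwarz. Your version costs an extra (harmless) factor of $2$ on the Green's function integral and requires the reality of $V$, which is assumed anyway; the paper's version avoids the symmetry argument at the price of a slightly less symmetric contour. Both yield the stated bound, and your handling of $t<1$ (where the vertical sides get long) as a trivial case is the standard and correct disposal.
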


\begin{proof}
Given $t > 0$, we will consider the following contour $\Gamma$:
$\Gamma=\Gamma_1 \cup \Gamma_2 \cup \Gamma_3 \cup \Gamma_4$, where
\begin{align*}
\Gamma_1 & = \left\{ z = E + iy : E \in [-K,K], \ y = t^{-1} \right\} \\
\Gamma_2 & = \left\{ z = E + iy : E = -K, \ y \in [-1,t^{-1}] \right\} \\
\Gamma_3 & = \left\{ z = E + iy : E \in [-K,K], \ y = -1 \right\} \\
\Gamma_4 & = \left\{ z = E + iy : E = K, \ y \in [-1,t^{-1}]
\right\}.
\end{align*}
Notice that for $z \in \Gamma$, we have $\Im z \le t^{-1}$ and
hence $|e^{-itz}| = e^{t \Im z} \le e$. Thus, by Lemma~\ref{l.1},
$$
| \langle e^{-itH} \delta_0 , \delta_n \rangle | \lesssim \sum_{j
= 1}^4 \int_{\Gamma_j} \left| \langle (H - z)^{-1} \delta_0 ,
\delta_n \rangle \right| \, |dz|.
$$

If $z \in \Gamma_2 \cup \Gamma_3 \cup \Gamma_4$, then clearly
$\mathrm{dist} (z, \sigma (H)) \ge 1$ and hence the well-known
Combes-Thomas estimate allows us to bound the contributions from
$\Gamma_2, \Gamma_3, \Gamma_4$ to $P_r(N,t)$ and $P_l(N,t)$ by $C
\exp(-cN)$.

The integral over $\Gamma_1$ can be estimated using the
Cauchy-Schwarz inequality:
$$
\left( \int_{\Gamma_1} \left| \langle (H - z)^{-1} \delta_0 ,
\delta_n \rangle \right| \, |dz| \right)^2 \le C(K) \int_{-K}^K
\left| \langle (H - E - it^{-1})^{-1} \delta_0 , \delta_n \rangle
\right|^2 dE.
$$
Combining these estimates, we obtain the assertion of the lemma.
\end{proof}

\begin{proof}[Proof of Theorem~\ref{t.main}.]
We have the following estimates \cite[p.~811]{dt3}:
\begin{align}
\label{f.dt3form1} \langle P_r \!(N,\cdot) \rangle(T) & \lesssim
\exp (-c N) + T^{-1} \int_{-K}^K \sum_{n \ge N} | \langle (H - E -
i T^{-1})^{-1} \delta_0 , \delta_n
\rangle |^2 dE, \\
\label{f.dt3form2} \langle P_l(N,\cdot) \rangle (T) & \lesssim
\exp (-c N) + T^{-1} \int_{-K}^K \sum_{n \le -N} | \langle (H - E
- i T^{-1})^{-1} \delta_0 , \delta_n \rangle |^2 dE.
\end{align}
It was then shown in \cite{dt3} how to derive
\begin{align*}
\langle P_r(N,\cdot) \rangle(T) & \lesssim \exp (-c N) + T^3
\int_{-K}^K \left( \max_{0 \le n \le
N-1} \left\| \Phi \left( n,E + i T^{-1} \right) \right\|^2 \right)^{-1} dE, \\
\langle P_l(N,\cdot) \rangle (T) & \lesssim \exp (-c N) + T^3
\int_{-K}^K \left( \max_{-N+1 \le n \le 0} \left\| \Phi \left( n,E
+ i T^{-1} \right) \right\|^2 \right)^{-1} dE.
\end{align*}
from \eqref{f.dt3form1} and \eqref{f.dt3form2}. If we use
Lemma~\ref{l.2} instead of \eqref{f.dt3form1} and
\eqref{f.dt3form2} and then follow the very same steps, we obtain
the desired bounds
\begin{align*}
P_r(N,t) & \lesssim \exp (-c N) + t^4 \int_{-K}^K \left( \max_{0
\le n \le
N-1} \left\| \Phi \left( n,E + i t^{-1} \right) \right\|^2 \right)^{-1} dE, \\
P_l(N,t) & \lesssim \exp (-c N) + t^4 \int_{-K}^K \left(
\max_{-N+1 \le n \le 0} \left\| \Phi \left( n,E + i t^{-1} \right)
\right\|^2 \right)^{-1} dE.
\end{align*}
This concludes the proof.
\end{proof}

\begin{proof}[Proof of Corollary~\ref{c.main}.]
Let us choose $N(t) = \lfloor C t^\alpha \rfloor $, where $C \in
(0,\infty)$ and $\alpha \in (0,1)$ are chosen such that
\eqref{assumeright} and \eqref{assumeleft} hold. Observe that
$P(N(t), t)=P(\lceil N(t) \rceil , t)$. Then Theorem~\ref{t.main}
shows that $P_r(N(t),t)$ and $P_l(N(t),t)$ go to $0$ faster than
any inverse power of $t$. By definition of $S^+(\alpha)$ and
$\alpha_u^+$ (cf.~\eqref{spludef} and \eqref{audef}), it follows
that $\alpha_u^+ \le \alpha$, which is \eqref{apubound}. Finally,
\eqref{bppbound} follows from \eqref{betaalphabound}.
\end{proof}

\section{Tight Lower Bounds for Strongly Coupled
Fibonacci}\label{s.3}

\subsection{The Complex Trace Map and the Distortion of Balls}

In this subsection we recall some ideas from \cite{dt3} and
present an improvement of the key distortion result contained in
that paper.

For $z \in \C$ and $n \in \Z$, consider the transfer matrices
$\Phi(n,z)$ associated with the difference equation \eqref{f.eve}
where $V$ is the Fibonacci potential given by \eqref{fibpot}.
Notice that these matrices depend on both $\lambda$ and $\theta$.

Define the matrices $M_k(z)$ by
\begin{equation}\label{fibtransfer}
\Phi_{\theta = 0}(F_k,z) = M_k(z), \quad k \ge 1,
\end{equation}
where $F_k$ is the $k$-th Fibonacci number, that is, $F_0 = F_1 =
1$ and $F_{k+1} = F_k + F_{k-1}$ for $k \ge 1$.

It is well-known that
$$
M_{k+1}(z) = M_{k-1}(z) M_k(z), \quad k \ge 2.
$$
For the variables $x_k(z) = \frac{1}{2} \mathrm{Tr} \, M_k(z))$,
$k \ge 1$, we have the recursion
\begin{equation}\label{tmrecursion}
x_{k+1}(z) = 2 x_{k}(z) x_{k-1}(z) - x_{k-2}(z)
\end{equation}
and the invariant
\begin{equation}\label{inv}
x_{k+1}(z)^2 + x_k(z)^2 + x_{k-1}(z)^2 - 2x_{k+1}(z) x_k(z)
x_{k-1}(z) - 1 \equiv \frac{\lambda^2}{4}.
\end{equation}
Letting $x_{-1}(z) = 1$ and $x_0(z) = z/2$, the recursion
\eqref{tmrecursion} holds for all $k \ge 0$.

For $\delta \ge 0$, consider the sets
$$
\sigma_k^\delta = \{ z \in \C : |x_k(z)| \le 1 + \delta \}.
$$
We have
$$
\sigma_k^\delta \cup \sigma_{k-1}^\delta \supseteq
\sigma_{k+1}^\delta \cup \sigma_k^\delta \to \Sigma_\lambda.
$$
Assume $\lambda > \lambda_0(\delta)$, where
\begin{equation}\label{lambda0}
\lambda_0(\delta) = [12(1 + \delta)^2 + 8(1 + \delta)^3 + 4]^{1/2}.
\end{equation}
The invariant \eqref{inv} implies
\begin{equation}\label{disj}
\sigma_k^\delta \cap \sigma_{k+1}^\delta \cap \sigma_{k+2}^\delta =
\emptyset.
\end{equation}
Moreover, the set $\sigma_k^\delta$ has exactly $F_k$ connected
components. Each of them is a topological disk that is symmetric
about the real axis.

It is known that all roots of $x_k$ are real. Consider such a root
$z$, that is, $x_k(z) = 0$ and define
$$
m (z) = \# \{ 0 \le l \le k-1 : |x_l(z)| \le 1 \}.
$$
Let
$$
c_{k,m} = \# \{ \text{roots of $x_k$ with $m(z) = m$} \}.
$$
An explicit formula for $c_{k,m}$ was found in \cite[Lemma~5]{DEGT}
(noting that our $c_{k,m}$ equals $a_{k,m} + b_{k,m}$ in the
notation of that paper). All we need here is the following
consequence of this result.

\begin{lemma}\label{supplemm}
For every $k \ge 2$, $c_{k,m}$ is non-zero if and only if
$\tfrac{k}{2} \le m \le \tfrac{2k}{3}$.
\end{lemma}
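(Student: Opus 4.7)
The plan is to deduce Lemma~\ref{supplemm} directly from the closed-form enumeration of roots established in \cite[Lemma~5]{DEGT}. That result provides an explicit formula $c_{k,m} = a_{k,m} + b_{k,m}$, with $a_{k,m}$ and $b_{k,m}$ given as (products or sums of) binomial coefficients whose arguments are affine in $k$ and $m$; the two summands correspond to a dichotomy in the itinerary of a root at the last step of the recursion \eqref{tmrecursion}, specifically to the two branches of the equation $x_k(z)=0$ after the invariant \eqref{inv} is solved for $x_{k-2}$. Since both summands are non-negative, checking non-vanishing of $c_{k,m}$ reduces to the algebraic question of when at least one of the binomial coefficients appearing in $a_{k,m}$ or $b_{k,m}$ has both arguments non-negative and correctly ordered. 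Translating these integrality/ordering inequalities back to conditions on $m$ yields precisely the range $k/2 \le m \le 2k/3$.

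It is worth emphasizing that the stated bounds on $m$ are genuinely dynamical and cannot be read off from the invariant \eqref{inv} or the disjointness \eqref{disj} alone. Using only \eqref{disj} together with the forced value $s_k := \mathbf{1}_{|x_k(z)| \le 1} = 1$ at a root $z$ of $x_k$, one finds that the sequence $(s_0, \ldots, s_k)$ contains no three consecutive ones; this produces a rough upper bound on $m$ but no nontrivial lower bound, and in fact allows values of $m$ well below $k/2$ that are ruled out by the dynamics. The actual lower bound $m \ge k/2$ reflects the specific recursive structure of admissible itineraries under the trace map, encoded in the inductive construction in \cite{DEGT}: long stretches of ``large'' iterates $|x_l| > 1$ grow super-exponentially and are incompatible with returning to $x_k(z)=0$ unless offset by sufficiently many ``small'' indices.

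The main obstacle, therefore, is not the proof of Lemma~\ref{supplemm} itself but rather the derivation of the combinatorial formula on which it rests; since \cite[Lemma~5]{DEGT} is already available, what remains is a routine verification that the support of $c_{k,m}$ coincides with $\{m \in \mathbb{Z} : \lceil k/2 \rceil \le m \le \lfloor 2k/3 \rfloor\}$. I would present the argument as a short paragraph carrying out this verification: write out $a_{k,m}$ and $b_{k,m}$ from \cite[Lemma~5]{DEGT}, identify the constraints on $m$ coming from each binomial coefficient argument, and check that at both endpoints of the claimed range at least one of $a_{k,m}$, $b_{k,m}$ is strictly positive so that non-vanishing is established throughout the range and realizability at the two extremes is explicit.
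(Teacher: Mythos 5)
Your approach is the same as the paper's: the lemma is obtained as a direct consequence of the explicit formula for $c_{k,m} = a_{k,m} + b_{k,m}$ from \cite[Lemma~5]{DEGT}, by reading off the range of $m$ for which the binomial-coefficient expressions are positive. The paper in fact does even less than you propose, simply citing the formula and stating the support as an immediate consequence, so your planned verification is if anything slightly more explicit but not a different route.
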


Let $\{z_k^{(j)}\}_{1 \le j \le F_k}$ be the zeros of $x_k$ and
write $m_k^{(j)} = m(z_k^{(j)})$ for $1 \le j \le F_k$. Denote
$B(z,r) = \{ w \in \C : | w - z | < r \}$. Then, we have the
following distortion result.

\begin{prop}\label{distortion}
Fix $k \ge 3$, $\delta > 0$, and $\lambda > \max \{
\lambda_0(2\delta) , 8 \}$. Then, there are constants
$c_\delta,d_\delta
> 0$ such that
\begin{equation}\label{unionballs}
\bigcup_{j = 1}^{F_k} B \left( z_k^{(j)}, r_k^{(j)} \right)
\subseteq \sigma_k^\delta \subseteq \bigcup_{j = 1}^{F_k} B \left(
z_k^{(j)}, R_k^{(j)} \right),
\end{equation}
where $r_k^{(j)} = c_\delta S_u(\lambda)^{-m_k^{(j)}}$, and
$R_k^{(j)} = d_\delta S_l(\lambda)^{-m_k^{(j)}}$. The first
inclusion in \eqref{unionballs} only needs the assumption $\lambda
> \lambda_0(2\delta)$.
\end{prop}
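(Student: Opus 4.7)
The plan is to reduce the statement to two-sided derivative bounds at the zeros of $x_k$, and then prove those bounds by iterating the differentiated trace recursion along the orbit of each zero.

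First I would carry out a topological/counting argument. The function $x_k$ is a polynomial of degree $F_k$ with $F_k$ simple (real) zeros, and the excerpt already records that $\sigma_k^\delta = x_k^{-1}(\overline{B(0,1+\delta)})$ has $F_k$ connected components, each a topological disk symmetric about $\R$. Summing the local mapping degrees of $x_k$ restricted to each component, and comparing with the total degree $F_k$, forces each component to contain exactly one zero $z_k^{(j)}$ and $x_k$ to restrict to a biholomorphism from that component onto the target disk. By a Koebe/Schwarz distortion argument applied to the inverse of this biholomorphism (normalized at $z_k^{(j)}$), the component around $z_k^{(j)}$ contains a Euclidean ball of radius $\gtrsim_\delta 1/|x_k'(z_k^{(j)})|$ and is contained in one of radius $\lesssim_\delta 1/|x_k'(z_k^{(j)})|$. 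Thus \eqref{unionballs} reduces to the derivative estimate
$$c_\delta' \, S_l(\lambda)^{m_k^{(j)}} \;\le\; |x_k'(z_k^{(j)})| \;\le\; C_\delta' \, S_u(\lambda)^{m_k^{(j)}}.$$

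Next I would establish this two-sided bound by differentiating the trace recursion \eqref{tmrecursion},
$$x_{k+1}' = 2 x_{k-1}\, x_k' + 2 x_k\, x_{k-1}' - x_{k-2}',$$
and iterating it along the orbit $\bigl(x_l(z_k^{(j)})\bigr)_{0 \le l \le k}$. At a zero of $x_k$ the middle term in the recursion drops out at the last step, and the invariant \eqref{inv} couples consecutive traces via $x_{k+1}(z_k^{(j)}) = -x_{k-2}(z_k^{(j)})$ and $x_{k+1}^2 + x_{k-1}^2 = 1 + \lambda^2/4$, so that the ``escaping'' traces are squeezed between two explicit values. The hypothesis $\lambda > \lambda_0(2\delta)$ (cf.\ \eqref{lambda0} and \eqref{disj}) guarantees that the orbit has a clean dichotomy: each step $l$ with $|x_l(z_k^{(j)})| \le 1$ is ``neutral'' (contributing a bounded multiplicative factor) while each step with $|x_l(z_k^{(j)})| > 1$ is ``hyperbolic'' with $|x_l(z_k^{(j)})|$ trapped in an interval whose endpoints are of order $S_l(\lambda)$ and $S_u(\lambda)$, respectively. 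Collecting the contributions and invoking Lemma~\ref{supplemm} (so that the exponent $m_k^{(j)}$ lies in the combinatorially admissible range $[k/2, 2k/3]$) yields the claimed powers of $S_l$ and $S_u$, with the lower inclusion requiring only the hypothesis $\lambda > \lambda_0(2\delta)$ and the upper inclusion requiring in addition $\lambda > 8$ so that $S_l(\lambda)$ is real and positive.

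The main obstacle is the bookkeeping in the second step: to pin down the \emph{exact} exponent $m_k^{(j)}$ (rather than $k - m_k^{(j)}$ or a generic linear combination), one has to unfold the differentiated recursion along the Fibonacci substitution structure of $M_{k+1} = M_{k-1} M_k$, and show that the multiplicative contributions at the $k - m_k^{(j)}$ ``unbounded'' steps cancel against the polynomial degree $F_k$ of $x_k$ in such a way that only the $m_k^{(j)}$ ``bounded'' steps end up driving the remaining geometric growth. This is precisely where the asymmetry between $S_l$ (lower bound) and $S_u$ (upper bound) arises, reflecting the range of possible values of the escaping traces dictated by the invariant. Once the derivative estimate is in place, the geometric conclusion of Proposition~\ref{distortion} follows from the distortion statement of the first paragraph applied component by component.
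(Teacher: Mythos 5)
Your overall strategy coincides with the one the paper invokes (by reference to \cite{dt3}): show that $x_k$ maps each connected component of $\sigma_k^\delta$ univalently onto the disk of radius $1+\delta$, apply Koebe-type distortion to the inverse branch (using the $2\delta$-neighborhood, which is why $\lambda>\lambda_0(2\delta)$ is assumed), and thereby reduce \eqref{unionballs} to the two-sided derivative estimate
$$
c'_\delta\, S_l(\lambda)^{m_k^{(j)}} \;\le\; \bigl|x_k'\bigl(z_k^{(j)}\bigr)\bigr| \;\le\; C'_\delta\, S_u(\lambda)^{m_k^{(j)}}.
$$
This reduction, including the degree count forcing exactly one zero per component and the correct matching of each derivative bound to the corresponding inclusion in \eqref{unionballs}, is sound and is exactly the first ingredient of the cited proof.

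The gap is in the derivative estimate itself, which is where essentially all the content beyond soft complex analysis lies, and which you yourself flag as unresolved bookkeeping. Your proposed mechanism --- iterate $x_{k+1}'=2x_{k-1}x_k'+2x_kx_{k-1}'-x_{k-2}'$ along the orbit and let the contributions of the $k-m_k^{(j)}$ unbounded steps ``cancel against the polynomial degree $F_k$'' --- is not how the exponent $m_k^{(j)}$ arises; no such cancellation against the degree occurs, and a dominant-term iteration of the differentiated recursion naturally counts the \emph{unbounded} traces (each large $|x_l|$ enters as a coefficient), which points to the wrong exponent. For instance, for the zero of $x_3$ near the origin the orbit has a single unbounded trace $x_1\approx-\lambda/2$, yet $|x_3'|\approx\lambda^2$, i.e.\ the exponent is $m=2$: a single unbounded trace can feed factors of $\lambda$ into several consecutive derivative steps. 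The argument in \cite{dt3} and \cite{DEGT} instead proceeds by induction on $k$, exploiting the nesting $\sigma_{k+1}^\delta\cup\sigma_k^\delta\subseteq\sigma_k^\delta\cup\sigma_{k-1}^\delta$: each zero of $x_{k+1}$ sits in a component of $\sigma_k^\delta$ or $\sigma_{k-1}^\delta$ around a zero of the earlier trace, and one controls the \emph{ratio} of derivatives at the two zeros, showing it lies (up to $\delta$-dependent constants) in $[S_l(\lambda),S_u(\lambda)]$ exactly when the count $m$ increases by one and is $O(1)$ otherwise; this is also where the specific constants $S_l(\lambda)$, $S_u(\lambda)$ and the extra hypothesis $\lambda>8$ for the lower derivative bound enter, via \eqref{inv} and \eqref{disj}. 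Without this inductive step (or an equivalent), the exponent $m_k^{(j)}$ --- and hence the proposition --- is not established.
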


\begin{proof}
The proof of this result is analogous to the proof of
\cite[Proposition~3]{dt3}. As explained there, $m_k^{(j)}$ governs
the size of $|x_k'(z_k^{(j)})|$, which in turn is closely related
to the size and shape of the connected component of
$\sigma_k^\delta$ that contains $z_k^{(j)}$ by Koebe's Distortion
Theorem.
\end{proof}

\subsection{Power-Law Upper Bounds for Transfer Matrices}

In this subsection we prove power-law upper bounds for the norm of
the transfer matrices for suitable complex energies and suitable
maximal scales.

\begin{prop}\label{powertm}
For every $\lambda , \delta > 0$, there are constant $C,\gamma$
such that for every $k$, every $z \in \sigma_k^\delta$, and every
$N$ with $1 \le N \le F_k$, we have
\begin{equation}\label{powerlaw}
\| \Phi(N,z) \| \le C N^\gamma.
\end{equation}
The constant $\gamma$ behaves like $O(\log \lambda)$ as $\lambda
\to \infty$ for each fixed $\delta$.
\end{prop}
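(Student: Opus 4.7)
\noindent\textbf{Proof proposal for Proposition~\ref{powertm}.}

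The plan is to reduce $\Phi(N,z)$ to a short product of the scale matrices $M_l(z)$ with $l \le k$, and then bound each factor uniformly in $l$ and in $z \in \sigma_k^\delta$. For $N \le F_k$, I would first write $N$ in its Zeckendorf representation $N = F_{i_1} + \cdots + F_{i_s}$ with $i_1 > \cdots > i_s$ pairwise non-adjacent, so that $s = O(k) = O(\log N)$ and each $i_j \le k$. Using the self-similarity of the Fibonacci potential and the device from \cite{d2} to absorb phase-dependent boundary pieces for general $\theta$, this produces a factorization
$$
\Phi(N,z) = B_s(z)\, M_{i_s}(z)\, B_{s-1}(z) \cdots B_1(z)\, M_{i_1}(z)\, B_0(z),
$$
in which each $B_j(z)$ has norm bounded by a constant that depends only on $\lambda$ and $\delta$.

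The second step is to establish uniform trace control for $z \in \sigma_k^\delta$. Iterating the nesting $\sigma_k^\delta \cup \sigma_{k-1}^\delta \supseteq \sigma_{k+1}^\delta \cup \sigma_k^\delta$ recorded in Subsection~3.1 shows that for every $l \le k$ one has $z \in \sigma_l^\delta \cup \sigma_{l+1}^\delta$, so at least one of $|x_l(z)|, |x_{l+1}(z)|$ is at most $1+\delta$. Combined with the invariant \eqref{inv} and the recursion \eqref{tmrecursion}, this bounds $|x_l(z)|$ by a quantity of order $\lambda^{O(1)}$ uniformly in $l \le k$.

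The main obstacle is the third step: promoting these trace bounds to norm bounds $\|M_l(z)\| \le C(\delta)\,\lambda^{\eta}$ with $\eta = O(1)$, uniformly in $l \le k$ and $z \in \sigma_k^\delta$. For real $E \in \Sigma_\lambda$ this is the classical S\"ut\H{o} / Iochum--Testard statement. To reach the complex case I would use Proposition~\ref{distortion}: each connected component of $\sigma_k^\delta$ lies inside a disk $B(z_k^{(j)}, R_k^{(j)})$ around a real root $z_k^{(j)}$ of $x_k$, and the real-case bound at $z_k^{(j)}$ can be transported to all of $\sigma_k^\delta$ by a maximum-modulus / analytic-continuation argument based on the subharmonicity of $z \mapsto \log \|M_l(z)\|$ and boundary estimates for $|x_k|$ on $\partial \sigma_k^\delta$. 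This step is genuinely needed, because for matrices in $SL(2,\C)$ a trace bound by itself does not control the operator norm, so one has to invoke either the complex-analytic structure of the components of $\sigma_k^\delta$ or the recursive dependence of the $M_l$'s.

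Once the uniform bound $\|M_l(z)\| \le C(\delta)\,\lambda^{\eta}$ is in hand, multiplying the $s = O(\log N)$ factors together with the bounded $B_j(z)$'s gives
$$
\|\Phi(N,z)\| \le C'\,\bigl(C(\delta)\,\lambda^{\eta}\bigr)^{O(\log N)} \le C\, N^{\gamma},
$$
with $\gamma = O(\log \lambda)$, which is the assertion of the proposition.
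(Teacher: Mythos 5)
Your skeleton (decompose $\Phi(N,z)$ into scale matrices $M_l$, control the traces, convert to norm bounds, multiply out) matches the paper's, but the load-bearing third step is wrong. The paper does \emph{not} obtain a bound on $\|M_l(z)\|$ that is uniform in $l$; what the trace bounds yield (see \eqref{normbound}) is $\|M_l(z)\| \le (C_{\lambda,\delta})^{l}$, exponential in the \emph{level} $l$ --- which is exactly a power law in $F_l \sim \phi^l$ with exponent of order $\log C_{\lambda,\delta}/\log\phi = O(\log\lambda)$, and this is where the $O(\log\lambda)$ in the statement comes from. A bound $\|M_l(z)\| \le C(\delta)\lambda^{\eta}$ uniform in $l$ is false at large coupling: the components of $\sigma_k^\delta$ have diameter at most $d_\delta S_l(\lambda)^{-k/2}$ by Proposition~\ref{distortion} while $|x_k|$ climbs from $0$ to $1+\delta$ across each component, so $\sup|x_k'|\gtrsim S_l(\lambda)^{k/2}$ on each component; since $x_k'$ is controlled by $F_k$ times the square of the largest prefix transfer matrix norm and $S_l(\lambda)^{1/2}>\phi$ for $\lambda\ge 8$, the norms must grow with $k$. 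The Iochum--Testard result you invoke asserts power-law growth of $\|M_l(E)\|$ in $F_l$, not boundedness, and S\"ut\H{o}'s theorem concerns boundedness of the traces only. Your proposed repair via subharmonicity also points the wrong way: the sub-mean-value property controls the value of $\log\|M_l\|$ at the real root $z_k^{(j)}$ by its values on the boundary of the component, so it cannot propagate a pointwise bound at the center outward.

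This gap is not cosmetic, because with the correct per-factor bound $\|M_{i_j}(z)\|\le C^{i_j}$ your final multiplication collapses: the Zeckendorf product has $s=O(k)$ factors with $\sum_j i_j$ as large as $ck^2$, so the naive estimate gives $C^{ck^2}=N^{c'k}$, which is super-polynomial in $N\sim\phi^{k}$. The genuine content of Proposition~\ref{powertm} is precisely the interpolation that recovers \eqref{powerlaw} from \eqref{normbound} despite this, which the paper defers to the careful analysis of the factors in \cite[Proposition~3.2]{dt1} (with \cite{dl,dl2,d2} supplying the decomposition and the passage to general $\theta$). You would need to reproduce or cite that argument rather than assume uniform control of $\|M_l\|$.
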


\begin{proof}
For $\theta = 0$, a modification of the proof of
\cite[Proposition~3.2]{dt1} gives the result. The extension to
arbitrary $\theta$ then follows using ideas from \cite{dl2}. For
the convenience of the reader, we sketch the main parts of the
argument.

Using the invariant \eqref{inv} and \cite[Lemma~4]{dt3}, it is
readily seen that $z \in \sigma_k^\delta$ implies the uniform
bound
\begin{equation}\label{tracebound}
|x_j(z)| \le C_{\lambda,\delta} \qquad \text{ for } 0 \le j \le
k-1,
\end{equation}
where $C_{\lambda,\delta}$ is an explicit constant that behaves
like $O(\lambda)$ as $\lambda \to \infty$; compare
\cite[Lemma~3.1]{dt1}.

Next, the uniform trace bound \eqref{tracebound} yields an upper
bound for the matrix norm,
\begin{equation}\label{normbound}
\|M_j(z)\| \le \left( C_{\lambda,\delta} \right)^j \qquad \text{
for } 0 \le j \le k-1,
\end{equation}
compare \cite[Eq.~(48)]{dt1}.

The final step is an interpolation of \eqref{normbound}. Given any
transfer matrix, $\Phi(N,z)$, it is possible to write it as a
product of matrices of type $M_j(z)$; compare
\cite[Section~3]{dl}. A careful analysis of the factors that
occur, together with the estimate \eqref{normbound}, then gives
\eqref{powerlaw}. The bound on $\gamma$ follows as in
\cite[Proposition~3.2]{dt1}.
\end{proof}

\subsection{Lower Bounds for the Transport Exponents}

In this subsection we prove Theorem~\ref{t.main3} using the
results from the previous two sections.

\begin{proof}[Proof of Theorem~\ref{t.main3}.]
By assumption, $\lambda > \sqrt{24}$. Thus, it is possible to
choose $\delta > 0$ so that $\lambda > \lambda_0(2 \delta)$.
Recall that $\sigma_k^{\delta}=\{ z \in \C \ : \ |x_k(z)| \le
1+\delta \}$. It follows from Lemma~\ref{supplemm} and
Proposition~\ref{distortion} that $\sigma_k^{\delta}$ has a
connected component $D_k$ such that
$$
B(z_k, r_k) \subset D_k,
$$
where $z_k \in \R$ and
$$
r_k = c_{\delta} S_u(\lambda)^{-\lceil \frac{k}{2} \rceil}.
$$

It follows from Proposition~\ref{powertm} that for $z \in D_k$ and
$1 \le N \le F_k$, we have
$$
\|\Phi(N,z)\| \le  C N^{\gamma}
$$
with suitable constants $C$ and $\gamma$.

Let us fix $k$ and take $N = F_k \sim \phi^k$ and $\e =
\frac{1}{T} \le \frac{r_k}{2}$. Thus,
$$
T \ge \frac{2}{r_k} = \frac{2}{c_\delta} S_u(\lambda)^{\lceil
\frac{k}{2} \rceil} \gtrsim N^s,
$$
where
$$
s = \frac{\log S_u(\lambda)}{2 \log \phi}.
$$

Due to the Parseval formula (see, e.g., \cite[Lemma~3.2]{kkl}), we
can bound the time-averaged outside probabilities from below as
follows,
\begin{equation}\label{parseval}
\langle P(N,\cdot) \rangle (T) \gtrsim \e \int_\R
\|\Phi(N,E+i\e)\|^{-2} \, dE.
\end{equation}
To bound the integral from below, we integrate only over those $E
\in (z_k-r_k, z_k+r_k)$ for which $E+i\e \in B(z_k, r_k) \subset
D_k$. The length of such an interval $I_k$ is larger than $cr_k$
for some suitable $c>0$. For $E \in I_k$, we have
$$
\|\Phi(N, E+i\e)\| \lesssim N^{\gamma} \lesssim
T^{\frac{\gamma}{s}}.
$$
Therefore, \eqref{parseval} gives
\begin{equation}\label{eq1}
\langle P(N,\cdot) \rangle (T) \gtrsim \frac{r_k}{T} \,
T^{-\frac{2\gamma}{s}} \gtrsim T^{-2-\frac{2\gamma}{s}},
\end{equation}
where $N = F_k$, $T \ge N^s$, for any $k$.

Now let us take any $T \ge 1$ and choose $k$ maximal with $F_k^s
\le T$. Then,
$$
F_k^s \le T < F_{k+1}^s \le 2^s F_k^s.
$$
It follows from \eqref{eq1} that
$$
\langle P(\tfrac12 T^\frac{1}{s},\cdot) \rangle (T) \ge \langle
P(N,\cdot) \rangle (T) \gtrsim T^{-2-\frac{2\gamma}{s}}
$$
for all $T \ge 1$. It follows from the definition of $\langle
\beta^-(p) \rangle$ and $\langle \alpha_u^- \rangle$ that
$$
\langle \beta_{\delta_0}^-(p) \rangle \ge \frac{1}{s} -
\frac{2}{p}\left( 1 + \frac{\gamma}{s} \right)
$$
and
$$
\langle \alpha_u^- \rangle \ge \frac{1}{s}.
$$
Since
$$
\frac{1}{s} = \frac{2 \log \phi}{\log S_u(\lambda)},
$$
and $\gamma = O(\log \lambda)$, this completes the proof of
Theorem~\ref{t.main3}.
\end{proof}

\subsection{Improved Lower Bounds for Zero Phase}

We conclude with a discussion of the special case $\theta = 0$.
For this particular value of the phase, the result above can be
improved. The key change is that in this case, one has ``all the
possible squares adjacent to the origin.'' This then enables one
to directly estimate $\langle P(N,\cdot) \rangle (T)$ from below
without employing the general upper bounds for the transfer
matrices.

\begin{theorem}\label{t.main4}
Consider the operator \eqref{f.oper} with potential \eqref{fibpot}
and phase $\theta = 0$. Suppose $\lambda > \sqrt{24}$ and let
$$
S_u(\lambda) = 2\lambda + 22.
$$
We have
\begin{equation}\label{betalower0}
\langle \beta^-_{\delta_0} (p) \rangle \ge \frac{2 \log \phi}{\log
S_u(\lambda)} - \frac{2}{p}\left( 1 - \frac{2 \log
\frac{\sqrt{17}}{4}}{5\log S_u(\lambda)} \right)
\end{equation}
for every $p > 0$.
\end{theorem}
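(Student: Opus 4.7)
My plan is to follow the proof of Theorem~\ref{t.main3} verbatim through the Parseval lower bound, but then replace the use of the generic polynomial transfer-matrix estimate from Proposition~\ref{powertm} (whose exponent $\gamma = O(\log\lambda)$ is what forces the $\frac{C\log\lambda}{\log S_u(\lambda)}$ correction in \eqref{betalower}) by a direct analysis that uses the special combinatorial structure of the $\theta=0$ Fibonacci potential near $n=0$. The setup is identical: pick $\delta>0$ with $\lambda>\lambda_0(2\delta)$, use Lemma~\ref{supplemm} and Proposition~\ref{distortion} to find a connected component $D_k$ of $\sigma_k^\delta$ centered at a real zero $z_k$ of $x_k$ with $m(z_k)=\lceil k/2\rceil$, and so $B(z_k,r_k)\subset D_k$ with $r_k = c_\delta S_u(\lambda)^{-\lceil k/2\rceil}$. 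Taking $\e = 1/T \le r_k/2$, Parseval gives
\begin{equation*}
\langle P(N,\cdot)\rangle(T) \gtrsim \e \int_{I_k} \|\Phi(N,E+i\e)\|^{-2}\,dE, \qquad |I_k|\gtrsim r_k.
\end{equation*}

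The key feature of $\theta=0$ --- which fails for generic $\theta$ --- is that the Fibonacci word viewed from $n=0$ contains maximal packings of Fibonacci squares $w_j^2$ immediately adjacent to the origin on both sides. Correspondingly, for a well-chosen integer $N$ slightly larger than $F_k$, the transfer matrix $\Phi(\pm N, z)$ can be written as a product in which matrix powers $M_j(z)^a$ appear explicitly, rather than the generic mixed product analyzed in Proposition~\ref{powertm}. Combining the uniform trace bound $|x_j(z)|\le C_\delta$ on $D_k$ (from \eqref{inv} and \eqref{tracebound}, with constants \emph{independent of $\lambda$}) with the Cayley--Hamilton identity $M_j^2 = 2x_j M_j - I$ yields control of each such $M_j^a$ tight enough to beat the baseline Parseval bound $T^{-2}$. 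A careful optimization over a length-$5$ fundamental block then produces
\begin{equation*}
\langle P(N,\cdot)\rangle(T) \gtrsim T^{-2+\kappa(\lambda)}, \qquad N \asymp T^{1/s}, \qquad s = \frac{\log S_u(\lambda)}{2\log\phi},
\end{equation*}
with $\kappa(\lambda) = \frac{4\log(\sqrt{17}/4)}{5\log S_u(\lambda)} > 0$; the explicit integer $5$ and the constant $\sqrt{17}/4$ arise from the length-$5$ combinatorial optimization.

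Inserting this into the Chebyshev-type moment inequality $\langle\langle |X|_{\delta_0}^p\rangle\rangle(T) \ge N^p \langle P(N,\cdot)\rangle(T)$ and taking $\log/(p\log T)$ yields
\begin{equation*}
\langle \beta^-_{\delta_0}(p)\rangle \ge \frac{1}{s} - \frac{2-\kappa(\lambda)}{p} = \frac{2\log\phi}{\log S_u(\lambda)} - \frac{2}{p}\left(1 - \frac{2\log(\sqrt{17}/4)}{5\log S_u(\lambda)}\right),
\end{equation*}
which is \eqref{betalower0}. The main obstacle is the intermediate step: pinning down the maximal square packing available at $\theta=0$ near $n=0$ and extracting a length-$5$ matrix-product estimate that genuinely beats $T^{-2}$ by an \emph{honestly positive} $\kappa(\lambda)$. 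The challenge is that any loss in the combinatorial packing, or in the tightness of the Cayley--Hamilton cancellation when controlling $\|M_j^a(z)\|$ on $D_k$, would reduce $\kappa$ to zero or make it $\lambda$-dependent, collapsing the improvement back to \eqref{betalower}. The symmetry/near-palindromicity of the Fibonacci potential at $\theta = 0$ about a point near the origin is what guarantees the same estimate on the left and the right, and hence the bound on $P = P_l + P_r$.
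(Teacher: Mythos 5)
There is a genuine gap in the middle step, and it is structural rather than technical. You propose to obtain $\langle P(N,\cdot)\rangle(T)\gtrsim T^{-2+\kappa(\lambda)}$ with $\kappa(\lambda)>0$ by sharpening the \emph{upper} bound on $\|\Phi(N,E+i\e)\|$ inside the transfer-matrix form of the Parseval inequality, integrated over the interval $I_k\subset B(z_k,r_k)$. This cannot work: each one-step matrix $T(m,z)$ has determinant $1$, so $\|\Phi(N,z)\|\ge 1$ for all $N$ and all complex $z$, hence $\|\Phi(N,E+i\e)\|^{-2}\le 1$ and
$$
\e\int_{I_k}\|\Phi(N,E+i\e)\|^{-2}\,dE \;\le\; \e\,|I_k| \;\lesssim\; T^{-1}\cdot r_k \;\asymp\; T^{-2},
$$
since $r_k\asymp 1/T$ with the choice of $k$ relative to $T$. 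No improvement of the norm bound --- by Cayley--Hamilton, square packing, or otherwise --- can push this route past the baseline $T^{-2}$; the two-site estimate $|u(N)|^2+|u(N+1)|^2\ge\|\Phi(N)\|^{-2}(|u(0)|^2+|u(1)|^2)$ simply cannot see a tail sum that is \emph{large} compared to the initial data.

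The actual source of the gain in the paper is different in kind: one returns to the resolvent form of Parseval, \eqref{parseval0}, and bounds $\sum_{n\ge N}|u(n)|^2$ from below for $u(n)=\langle(H-E-iT^{-1})^{-1}\delta_0,\delta_n\rangle$ by the Gordon-type mass reproduction technique (in the form of \cite[Lemmas~2 and 3]{dt2}, extended to complex energies). Because at $\theta=0$ the potential repeats its values from $1$ to $F_j$ once for every $j$, and because $\min\{|x_j(z)|,|x_{j+1}(z)|\}\le 1+\delta$ on $\sigma_k^\delta$ by \cite[Lemma~4]{dt3}, the Cayley--Hamilton identity forces the solution to reproduce its initial mass along a growing sequence of sites, yielding $\sum_{n\ge N}|u(n)|^2\ge CN^{2\kappa}(|u(0)|^2+|u(1)|^2)$ with $\kappa$ arbitrarily close to $\frac{\log(\sqrt{17}/4)}{5\log\phi}$, together with the uniform lower bound $|u(1)|^2+|u(0)|^2+|u(-1)|^2\ge c(1+(\Im F(E+\tfrac{i}{T}))^2)$. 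It is this polynomially growing factor $N^{2\kappa}$, multiplying the same $T^{-2}$ baseline over $I_k$, that produces $T^{-2+2\kappa/s}$. Your combinatorial ingredients (squares adjacent to the origin, Cayley--Hamilton, the constants $5$ and $\sqrt{17}/4$) are the right ones, and your final arithmetic from $T^{-2+\kappa(\lambda)}$ to \eqref{betalower0} is correct, but they must be deployed as a lower bound on the tail of the resolvent, not as an upper bound on the transfer-matrix norm.
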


\begin{proof}
Starting again with the Parseval formula \cite[Lemma~3.2]{kkl}, we
have
\begin{equation}\label{parseval0}
\langle P_r(N,\cdot) \rangle (T) = \frac{1}{\pi T} \sum_{n \ge N}
\int_\R | \langle (H - E - i T^{-1})^{-1} \delta_0 , \delta_n
\rangle |^2 \, dE.
\end{equation}
Using the fact that
$$
u(n) = \langle (H - E - i T^{-1} \delta_0 , \delta_n \rangle
$$
solves the difference equation $H u = (E + i T^{-1}) u$ away from
zero, one can bound the right-hand side of \eqref{parseval0} from
below by the Gordon-type mass reproduction technique used, for
example, in \cite{d1,dkl,dt2,jl}. We refer the reader to
\cite[Lemmas~2 and 3]{dt2} for a formulation of this technique
that is particularly well suited to our situation here. It is
important to point out that while in those papers, only real
energies are considered, the statements extend to complex energies
as soon as one can arrange for the same transfer matrix trace
estimates to hold.

As in the proof of Theorem~\ref{t.main3}, since $\lambda >
\sqrt{24}$, it is possible to choose $\delta > 0$ so that $\lambda
> \lambda_0(2 \delta)$. Fixing this value of $\delta$, we consider
the set $\sigma_k^{\delta}=\{ z \in \C \ : \ |x_k(z)| \le 1+\delta
\}$. Notice that $x_k(z)$ is one-half the trace of the transfer
matrix of the potential under consideration from the origin to
$F_k$. This is why the case $\theta = 0$ is special! It follows
from \cite[Lemma~4]{dt3} that for $0 \le j \le k-1$, $\min \{
|x_j(z)| , |x_{j+1}(z)| \} \le 1 + \delta$ for every $z \in
\sigma_k^{\delta}$. Since, for every $j$, the potential repeats
its values from $1$ to $F_j$ once, we can apply
\cite[Lemma~2]{dt2} to bound the right-hand side of
\eqref{parseval0} from below. Of course we use either $j$ or
$j+1$, depending on which of $|x_j(z)$ and $|x_{j+1}(z)|$ is
bounded by $1 + \delta$. Carrying this out inductively, we find
that
$$
\sum_{n \ge N} |u(n)|^2 \ge C N^{2 \kappa} (|u(1)|^2 + |u(0)|^2)
$$
for a $\delta$-dependent constant
$$
\kappa < \frac{\log \frac{\sqrt{17}}{4}}{5 \log \phi}
$$
and a corresponding suitable constant $C > 0$. One checks that
$\kappa$ can be made arbitrarily close to $\frac{\log
\frac{\sqrt{17}}{4}}{5 \log \phi}$ by making $\delta$ sufficiently
small. The same reasoning can be applied to $\langle P_l(N,\cdot)
\rangle (T)$. In the end, one only needs to observe that
$$
|u(1)|^2+|u(0)|^2+|u(-1)|^2 \ge c (1 + (\Im F(E +
\tfrac{i}{T}))^2),
$$
uniformly in the energy, where $F$ is the Borel transform of the
spectral measure; compare \cite{dst,dt1}.

From this point on, one can proceed as in the proof of
Theorem~\ref{t.main3}. It then follows, with the same notation as
above, that
\begin{align}\label{xxx}
\langle P(c T^\frac{1}{s}, \cdot) \rangle (T) & \ge c
T^{-1+2\kappa/s} \int_{I_k} (1 + (\Im F(E + \tfrac{i}{T}))^2) \, dE \\
\nonumber & \ge c T^{-1+2\kappa/s} |I_k| \\
\nonumber & \ge c T^{-2+2\kappa/s}
\end{align}
and hence
$$
\langle \beta_{\delta_0}^-(p) \rangle \ge \frac{1}{s} -
\frac{2}{p}\left( 1 - \frac{\kappa}{s} \right)
$$
from which the result follows.
\end{proof}

\noindent\textit{Remark.} It is possible to improve the estimate
\eqref{betalower0} if one revisits the inequality \eqref{xxx} and
observes that the spectral measure is singular and the integral
over $I_k$ in question is larger than just $|I_k|$. This can be
made quantitative using ideas from \cite{dt2}.

\end{document}